\begin{document}
	
\title{Algorithm for solving variational inequalities with relatively strongly monotone operators}

\author{Alexander A. Titov}
\institute{Moscow Institute of Physics and Technology, Moscow, Russia}

	


\maketitle
\begin{abstract}
Basing on some recently proposed methods for solving variational inequalities with non-smooth operators, we propose an analogue of the Mirror Prox method for the corresponding class of problems under the assumption of relative smoothness and relative strong monotonicity of the operator.

\keywords{Relative Boundedness. Variational Inequality.
Relative strong monotonicity. Relative Smoothness. Relative strong convexity.}

\end{abstract}

\section*{Introduction}\label{sec1_introduction}
Recently \cite{Main} there were proposed some
numerical methods for solving saddle point problems and variational inequalities with simplified requirements for the smoothness conditions of functionals. By simplified smoothness conditions we mean, generally speaking, non-smooth operators, which  do not satisfy the Lipschitz condition, but satisfy some of its weakened versions. We base on the novel work \cite{Inex}, in which the authors successfully transferred the ideology of gradient methods to the case of relatively strongly convex and relatively smooth objective.
We continue developing similar ideas and consider variational inequalities with relatively smooth and relatively strongly monotone operators. Such operators naturally arise when considering the relatively strongly convex saddle point problem and reducing it to the variational inequality \cite{Nesterov}.

Let $E$ be some finite-dimensional vector space and $E^*$ be its dual. For a fixed  norm $\|.\|$ on $E$ define the corresponding dual norm $\|\cdot\|_*$ as follows:
$$\|\phi\|_{*} = \max\limits_{\|x\|\leq 1} \{ \langle \phi, x \rangle\},$$
where $\langle \phi, x \rangle$ denotes the value of the linear function $\phi \in E^*$ at the point $x\in E$.

Let $X\subset E$ be a compact set.

Consider relatively smooth, relatively strongly convex, and relatively strongly monotone operator $g(x):X\rightarrow E^*$, satisfying

\begin{enumerate}
    \item Inexactness \begin{equation}\label{eq1}
    \langle g(y),x-y \rangle\leq \langle g_{\delta}(y),x-y \rangle +\delta 
    \end{equation}
    \item Relative strong monotonicity
    \begin{equation}\label{eq2}
    \langle g_{\delta}(y),x-y\rangle + \langle g_{\delta}(x),y-x\rangle +\mu V(x,y) \leq \delta 
    \end{equation}
    \item Relative smoothness
    \begin{equation}\label{eq3}
    \langle g_{\delta}(y)-g_{\delta}(z),x-z\rangle \leq LV(x,z) + LV(z,y) +\delta
    \end{equation}
\end{enumerate}
\begin{definition}[Minty Variational Inequality]
     For a given operator\\ $g(x):X\rightarrow\mathbb{R}$ we need to find a vector $x_*\in X$, such that
\begin{equation}\label{VI}
  \langle g(x),x_*-x\rangle\leq 0, \quad \forall x\in X.
\end{equation}
    \end{definition}
We also need to choose a prox-function $d(x)$, which is continuously differentiable and 1-strongly convex on $X$, and the corresponding Bregman distance, defined as follows: 
\begin{equation}\label{Bregmann_def}
    V(y,x) = d(y) - d(x) - \langle \nabla d(x), y - x \rangle, \quad \forall x, y \in X.
\end{equation}

Consider the following Mirror Prox algorithm \cite{Inex}.

\begin{algorithm}[h!]
\caption{Universal Mirror Prox (\tt{UMP})}
\label{Alg:UMP}
\begin{algorithmic}[1]
   \REQUIRE $\varepsilon > 0$, $\delta >0$, $x_0 \in X$, initial guess $L_0 >0$, prox-setup: $d(x)$, $V(x,z)$.
   \STATE Set $k=0$, $z_0 = \arg \min_{u \in Q} d(u)$.
   \REPEAT
			\STATE Find the smaller $i_k\geq 0:$ 
			\begin{equation}
			\begin{multlined}
			    \langle g_{\delta}(z_k),z_{k+1}-z_k\rangle \leq \langle g_{\delta}(w_k),z_{k+1}-w_k\rangle + \langle g_{\delta}(z_k),w_{k}-z_k\rangle \\+L_{k+1}(V(w_k,z_k) + V(z_{k+1},w_k)) +\delta
			    \end{multlined}
			\end{equation}
			
			\STATE where $L_{k+1} = 2^{i_k-1}L_k$ and 
    			\begin{equation}
    			     w_k = \arg\min\limits_{x}\{\langle g_{\delta}(z_k),x-z_k\rangle + L_{k+1}V(x,z_k)\}
    			\end{equation}
    				\begin{equation}
    			     z_{k+1} = \arg\min\limits_{x}\{\langle g_{\delta}(w_k),x-w_k\rangle + L_{k+1}V(x,z_k)\}
    			\end{equation}
			\UNTIL{
			\begin{equation}
		        S_N = \sum\limits_{k=0}^{N-1}\frac{1}{L_{k+1}}\geq \frac{\max\limits_{x\in X}V(x,x_0)}{\varepsilon}
			\end{equation}}
		\ENSURE $z^k$
\end{algorithmic}
\end{algorithm}
\begin{theorem}
It is well-known \cite{Inex}, that for the output of the Mirror Prox algorithm the following inequality takes place:
$$
\langle g(x_*),z_k-x_*\rangle \leq -\frac{1}{S_N}\sum\limits_{k=0}^{N-1}\frac{\langle g(w_k),x_*-w_k \rangle}{L_{k+1}} \leq \frac{2L V(x_*,z_0)}{N},
$$
moreover, the total number of iterations does not exceed $$N = \frac{2L\max\limits_{x\in X}V(x_0,x)}{\varepsilon}.$$
\end{theorem}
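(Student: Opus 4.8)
The plan is to follow the classical Mirror Prox analysis, adapted to the inexact and relatively smooth setting, and to organize the argument into three stages: a per-iteration estimate obtained from the two proximal subproblems, a telescoping summation, and a final accounting of the step sizes $L_{k+1}$ and the accumulated inexactness $\delta$. The backbone is the pair of first-order optimality conditions produced by the definitions of $w_k$ and $z_{k+1}$, each valid for all $x\in X$ and each processed through the three-point identity $\langle \nabla d(u)-\nabla d(z),\,x-u\rangle = V(x,z)-V(x,u)-V(u,z)$ for the Bregman divergence \eqref{Bregmann_def}.

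First I would evaluate the optimality condition for $w_k$ at the point $x=z_{k+1}$, which yields an upper bound on $\langle g_\delta(z_k),\,w_k-z_{k+1}\rangle$ in terms of $L_{k+1}\big(V(z_{k+1},z_k)-V(z_{k+1},w_k)-V(w_k,z_k)\big)$, and evaluate the optimality condition for $z_{k+1}$ at $x=x_*$, which bounds $\langle g_\delta(w_k),\,z_{k+1}-x_*\rangle$ by $L_{k+1}\big(V(x_*,z_k)-V(x_*,z_{k+1})-V(z_{k+1},z_k)\big)$. I would then rearrange the line-search stopping inequality into the form $\langle g_\delta(z_k),\,z_{k+1}-w_k\rangle \le \langle g_\delta(w_k),\,z_{k+1}-w_k\rangle + L_{k+1}\big(V(w_k,z_k)+V(z_{k+1},w_k)\big)+\delta$. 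Chaining these three estimates causes the terms $V(w_k,z_k)$, $V(z_{k+1},w_k)$ and $V(z_{k+1},z_k)$ to cancel, leaving the clean per-step bound
$$\frac{\langle g_\delta(w_k),\,w_k-x_*\rangle}{L_{k+1}} \le V(x_*,z_k)-V(x_*,z_{k+1})+\frac{\delta}{L_{k+1}}.$$
Summing over $k=0,\dots,N-1$ telescopes the Bregman terms down to $V(x_*,z_0)-V(x_*,z_N)\le V(x_*,z_0)$, and dividing by $S_N$ produces the central estimate after passing from $g_\delta$ back to $g$ on the left via the inexactness condition \eqref{eq1}.

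To convert this into the stated rate I would invoke relative smoothness \eqref{eq3}, which guarantees that the line search accepts any $L_{k+1}\ge L$; hence $L_{k+1}\le 2L$, so $\tfrac{1}{L_{k+1}}\ge\tfrac{1}{2L}$ and $S_N\ge \tfrac{N}{2L}$. This simultaneously turns $V(x_*,z_0)/S_N$ into $2L\,V(x_*,z_0)/N$ and, through the stopping rule $S_N\ge \max_x V(x,x_0)/\varepsilon$, caps the iteration count at $N=2L\max_x V(x_0,x)/\varepsilon$, since $S_N$ grows by at least $\tfrac{1}{2L}$ per step. For the left inequality I would identify the reported output with the $S_N$-weighted average $\tfrac{1}{S_N}\sum_k w_k/L_{k+1}$, use linearity to write $\langle g(x_*),\,z_k-x_*\rangle$ as the corresponding weighted average of $\langle g(x_*),\,w_k-x_*\rangle$, and then apply monotonicity \eqref{eq2} (discarding the nonnegative term $\mu V$) to replace each $\langle g(x_*),\,w_k-x_*\rangle$ by $\langle g(w_k),\,w_k-x_*\rangle=-\langle g(w_k),\,x_*-w_k\rangle$.

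The main obstacle is the sign-bookkeeping in the chaining step: I must ensure that every Bregman term introduced by an optimality condition is exactly matched by one from the line-search inequality so that only the telescoping difference $V(x_*,z_k)-V(x_*,z_{k+1})$ survives, and simultaneously track that each of the three combined inequalities contributes at most one $\delta$, so the total inexactness is controlled by $\delta S_N$ and stays at the order of $\delta$ after division by $S_N$. A secondary point requiring care is the precise identification of the output with the $S_N$-weighted average, which is exactly what makes the leftmost quantity $\langle g(x_*),\,z_k-x_*\rangle$ meaningful and lets monotonicity and linearity deliver the first inequality.
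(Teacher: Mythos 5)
Your argument is correct and is essentially the standard Mirror Prox analysis; the paper itself offers no proof of this theorem, merely citing \cite{Inex}, and your chain (optimality of $w_k$ at $z_{k+1}$, optimality of $z_{k+1}$ at $x_*$, the line-search inequality, cancellation of the three auxiliary Bregman terms, telescoping, and $L_{k+1}\le 2L$ from \eqref{eq3}) is exactly the derivation that underlies the cited result. The only discrepancy worth noting is that your bookkeeping honestly produces additive $O(\delta)$ terms (from the line search and from passing between $g$ and $g_\delta$) that the theorem as printed silently drops; your version is the defensible one, and it is consistent with how the paper handles $\delta$ in the Lemma and in Theorem 2.
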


\begin{lemma}
For the  described operator $g$ and Mirror Prox algorithm there takes place the following $\delta-$decreasing of Bregman divergence:
\begin{equation}
V(x_*,z_{N})\leq V(x_*,z_{0}) +\delta S_N.
\end{equation}
\end{lemma}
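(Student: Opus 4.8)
The plan is to prove a one-step contraction $V(x_*,z_{k+1}) \le V(x_*,z_k) + \delta/L_{k+1}$ and then telescope it over $k=0,\dots,N-1$, using that $\sum_{k=0}^{N-1} 1/L_{k+1} = S_N$. Everything rests on the three-point identity for the Bregman divergence, which by the definition (\ref{Bregmann_def}) reads $\langle \nabla d(z_{k+1}) - \nabla d(z_k),\, x_* - z_{k+1}\rangle = V(x_*,z_k) - V(x_*,z_{k+1}) - V(z_{k+1},z_k)$, together with the first-order optimality conditions of the two prox-subproblems that define $w_k$ and $z_{k+1}$.

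First I would write the optimality condition of the $z_{k+1}$-step tested against the competitor $x_*$: for every $x\in X$ one has $\langle g_{\delta}(w_k) + L_{k+1}(\nabla d(z_{k+1}) - \nabla d(z_k)),\, x - z_{k+1}\rangle \ge 0$, and I set $x=x_*$. Substituting the three-point identity converts this into $\langle g_{\delta}(w_k),\, z_{k+1} - x_*\rangle \le L_{k+1}(V(x_*,z_k) - V(x_*,z_{k+1}) - V(z_{k+1},z_k))$. I then split $\langle g_{\delta}(w_k),\, x_* - z_{k+1}\rangle = \langle g_{\delta}(w_k),\, x_* - w_k\rangle + \langle g_{\delta}(w_k),\, w_k - z_{k+1}\rangle$ and control the last term by combining two facts: the line-search stopping inequality (the defining inequality for $i_k$), which after rearrangement reads $\langle g_{\delta}(z_k) - g_{\delta}(w_k),\, z_{k+1} - w_k\rangle \le L_{k+1}(V(w_k,z_k) + V(z_{k+1},w_k)) + \delta$, and the optimality condition of the $w_k$-step tested at $x=z_{k+1}$, which through the three-point identity centered at $w_k$ gives $\langle g_{\delta}(z_k),\, z_{k+1} - w_k\rangle \ge L_{k+1}(V(z_{k+1},w_k) + V(w_k,z_k) - V(z_{k+1},z_k))$. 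Adding these two makes the divergences $V(w_k,z_k)$ and $V(z_{k+1},w_k)$ cancel and yields $\langle g_{\delta}(w_k),\, w_k - z_{k+1}\rangle \le L_{k+1}V(z_{k+1},z_k) + \delta$. Inserting this back cancels the $\pm L_{k+1}V(z_{k+1},z_k)$ terms and leaves the clean per-step estimate $L_{k+1}(V(x_*,z_{k+1}) - V(x_*,z_k)) \le \langle g_{\delta}(w_k),\, x_* - w_k\rangle + \delta$.

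The main obstacle is pinning down the sign of the residual $\langle g_{\delta}(w_k),\, x_* - w_k\rangle$, which must be shown nonpositive (up to a term absorbed into $\delta$). The inexactness relation (\ref{eq1}) alone points the wrong way here, so I would instead invoke relative strong monotonicity (\ref{eq2}) with $y=w_k$ and $x=x_*$, giving $\langle g_{\delta}(w_k),\, x_* - w_k\rangle \le \delta - \langle g_{\delta}(x_*),\, w_k - x_*\rangle - \mu V(x_*,w_k)$, and then use that $x_*$ solves the variational inequality (\ref{VI}) — equivalently, under monotonicity, the Stampacchia inequality $\langle g(x_*),\, w_k - x_*\rangle \ge 0$ — combined once more with (\ref{eq1}) to bound $\langle g_{\delta}(x_*),\, w_k - x_*\rangle$ from below. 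Since $\mu V(x_*,w_k)\ge 0$, these facts force $\langle g_{\delta}(w_k),\, x_* - w_k\rangle \le 0$ up to $\delta$; this is precisely the step where the variational-inequality property of $x_*$ and the monotonicity assumption are indispensable, and where the error terms must be tracked carefully.

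Finally, with $\langle g_{\delta}(w_k),\, x_* - w_k\rangle \le 0$, I divide the per-step estimate by $L_{k+1}$ to obtain $V(x_*,z_{k+1}) - V(x_*,z_k) \le \delta/L_{k+1}$ and sum over $k=0,\dots,N-1$. The left-hand side telescopes to $V(x_*,z_N) - V(x_*,z_0)$ while the right-hand side sums to $\delta\sum_{k=0}^{N-1} 1/L_{k+1} = \delta S_N$, which is exactly the claimed $V(x_*,z_N) \le V(x_*,z_0) + \delta S_N$. The only bookkeeping to keep in mind is that the several additive $\delta$ constants produced along the way are all lumped into the single symbol $\delta$ under the usual generic-error convention.
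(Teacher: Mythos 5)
Your proposal is correct and follows essentially the same route as the paper: the per-iteration bound $-\langle g_{\delta}(w_k), x_*-w_k\rangle \le L_{k+1}\bigl(V(x_*,z_k)-V(x_*,z_{k+1})\bigr)+\delta$, combined with the fact that the Minty solution is (under the monotonicity assumption) a Stampacchia solution, the inexactness relation \eqref{eq1}, and relative strong monotonicity \eqref{eq2} to kill the residual $\langle g_{\delta}(w_k),x_*-w_k\rangle$ up to $O(\delta)$, followed by telescoping. The only difference is that you derive that per-iteration bound from scratch via the three-point identity, the two prox optimality conditions, and the line-search inequality, whereas the paper simply cites it from the proof of Theorem 4.8 in \cite{Inex}; your version is therefore more self-contained but not a different argument.
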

\begin{proof}
It is known (\cite{Inex}, proof of Theorem 4.8), that $\forall k\geq 0$
\begin{equation}
    -\langle g_{\delta}(w_k),u-w_k \rangle\leq L_{k+1}V(u,z_k)-L_{k+1}V(u,z_{k+1}) +\delta
\end{equation}
Due to the relatively strong monotonicity of the operator $g$, we can consider, that the solution to weak variational inequality is also a strong solution:
\begin{equation}
-\langle g(x_*),w_k-x_*\rangle\leq 0,
\end{equation}
which, due to \eqref{eq1}, leads to 
\begin{equation}
-\langle g_{\delta}(x_*),w_k-x_*\rangle\leq \delta,
\end{equation}
and
			\begin{equation}
			\begin{multlined}
			    0\leq -\langle g_{\delta}(x_*),x_*-w_k\rangle +\delta \leq-\langle g_{\delta}(w_k),x_*-w_k \rangle\leq\\ \leq L_{k+1}V(x_*,z_k)-L_{k+1}V(x_*,z_{k+1}) +\delta.
			    \end{multlined}
			\end{equation}

So, there takes place the following $\delta-$decreasing of the Bregman divergence:
\begin{equation}
V(x_*,z_{k+1})\leq V(x_*,z_{k}) +\frac{\delta}{L_{k+1}} \quad \forall k.
\end{equation}

Thus,
\begin{equation}
V(x_*,z_{N})\leq V(x_*,z_{0}) +\delta\sum\limits_{i=1}^N\frac{1}{L_i}.
\end{equation}

\end{proof}

Let us now consider the following algorithm, which is in fact the restarted version of the considered Mirror Prox algorithm.

\begin{algorithm}[htp]
	\caption{Restarted Universal Mirror Prox ({\tt Restarted UMP}).}
	\label{Alg:RUMP}
	\begin{algorithmic}[1]
		\REQUIRE $\varepsilon > 0$, $\mu >0$, $\Omega$ : $d(x) \leq \frac{\Omega}{2} \ \forall x\in Q: \|x\| \leq 1$; $x_0,\; R_0 \ : \|x_0-x_*\|^2 \leq R_0^2.$
		\STATE Set $p=0,d_0(x)=R_0^2d\left(\frac{x-x_0}{R_0}\right)$.
		\REPEAT
		\STATE Set $x_{p+1}$ as the output of {\tt UMP} for monotone case with prox-function $d_{p}(\cdot)$ and stopping criterion $\sum_{i=0}^{k-1}M_i^{-1}\geq \frac{\Omega}{\mu}$.
		\STATE Set $R_{p+1}^2 = \frac{\Omega R_0^2}{2^{(p+1)}\mu S_{N_p}} -\frac{\delta}{\mu}$.
		\STATE Set $d_{p+1}(x) \leftarrow R_{p+1}^2d\left(\frac{x-x_{p+1}}{R_{p+1}}\right)$.
		\STATE Set $p=p+1$.
		\UNTIL			
		$p > \log_2\left(\frac{2R_0^2}{\varepsilon}\right).$	
		\ENSURE $x_p$.
	\end{algorithmic}
\end{algorithm}

\begin{theorem}
Consider relatively smooth and relatively strongly monotone operator $g$, satisfying \eqref{eq1}-\eqref{eq3}. Then the restarted version of Mirror Prox algorithm produces the point $x_p$, such that
$$V(x_*,x_p)\leq \varepsilon+\frac{\delta}{\mu}\left(1+\frac{2 \Omega L}{\mu}\right),$$
moreover, the total number of iterations does not exceed $$N = \frac{2L\Omega}{\mu}\log_2\frac{R_0^2}{\varepsilon}.$$
\end{theorem}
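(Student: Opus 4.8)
The plan is to run an induction on the restart index $p$, proving that each outer loop of \texttt{Restarted UMP} shrinks a radius $R_p^2$ controlling the distance to $x_*$ by a factor close to $\tfrac12$, up to an additive $\delta$-error. Since the number of restarts is $\log_2(R_0^2/\varepsilon)$, the geometric decay drives the main term below $\varepsilon$; the key point for the error term is that the same contraction also damps the errors inherited from earlier restarts, so that they accumulate to a bounded quantity rather than growing with $p$.

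First I would isolate the per-restart guarantee. Fix a restart with prox-function $d_p$, divergence $V_p$, start $z_0=x_p$, and run \texttt{UMP} until $S_{N_p}\geq \Omega/\mu$. Summing the telescoping inequality from the proof of the Lemma (with $u=x_*$) and dividing by $S_{N_p}$ bounds the weighted-average gap $\tfrac{1}{S_{N_p}}\sum_k \tfrac{-\langle g(w_k),x_*-w_k\rangle}{L_{k+1}}$ from above by $V_p(x_*,x_p)/S_{N_p}+O(\delta)$ -- this is exactly the content of the first Theorem. Independently, relative strong monotonicity \eqref{eq2}, combined with the Stampacchia form $\langle g(x_*),w_k-x_*\rangle\geq 0$ of the solution \eqref{VI} and the inexactness \eqref{eq1}, bounds each gap term from below by $\mu V(w_k,x_*)-O(\delta)$. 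Taking the output to be the weighted average $x_{p+1}=\tfrac{1}{S_{N_p}}\sum_k \tfrac{w_k}{L_{k+1}}$ and using convexity of $V(\cdot,x_*)$ in its first argument, these two bounds combine into
$$\mu\, V(x_{p+1},x_*)\leq \frac{V_p(x_*,x_p)}{S_{N_p}}+O(\delta).$$

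Next I would feed in the rescaling. The assumption $d(x)\leq \Omega/2$ for $\|x\|\leq 1$ gives $\max_x V_p(x,x_p)\leq \tfrac{\Omega}{2}R_p^2$, so under the inductive hypothesis $R_p^2\leq R_0^2/2^{p}$ the last display becomes $V(x_{p+1},x_*)\leq \tfrac{\Omega R_0^2}{2^{p+1}\mu S_{N_p}}+O(\delta/\mu)$, which is precisely $R_{p+1}^2+O(\delta/\mu)$ for the update rule defining $R_{p+1}^2$ in \texttt{Restarted UMP}. Since $S_{N_p}\geq \Omega/\mu$ forces $R_{p+1}^2\leq R_0^2/2^{p+1}$, the $1$-strong convexity of $d$ (so that $V\geq \tfrac12\|\cdot\|^2$) propagates the radius invariant to index $p+1$ and closes the induction, while the Lemma guarantees that $x_*$ does not drift outside the rescaled ball and hence that the $\Omega$-bound keeps applying. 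Unrolling $R_{p+1}^2\approx \tfrac12 R_p^2+e$ over $p\leq\log_2(R_0^2/\varepsilon)$ restarts yields $V(x_*,x_p)\leq \varepsilon+2e$, and identifying the per-restart error $e$ with $\tfrac{\delta}{2\mu}\big(1+\tfrac{2\Omega L}{\mu}\big)$ gives the stated accuracy. For the complexity, the adaptive backtracking keeps $L_{k+1}\leq 2L$, hence $S_{N_p}\geq N_p/(2L)$, so the inner stopping rule is met within $N_p\leq 2L\Omega/\mu$ steps; multiplying by the number of restarts gives the total $N=\tfrac{2L\Omega}{\mu}\log_2\tfrac{R_0^2}{\varepsilon}$.

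The step I expect to be the main obstacle is the passage from \texttt{UMP}'s averaged-gap guarantee to a genuine distance contraction in the displayed inequality. It requires, simultaneously, handling $g_\delta$ versus $g$ through \eqref{eq1} with the correct signs, using the solution property of \eqref{VI} to eliminate the $g(x_*)$ terms, and reconciling the divergence $V_p$ internal to \texttt{UMP} with the base divergence $V$ (and its non-symmetry, $V(x_{p+1},x_*)$ versus $V(x_*,x_{p+1})$) carried by \eqref{eq2}, so that the telescoped estimate matches the algorithm's update $R_{p+1}^2=\tfrac{\Omega R_0^2}{2^{p+1}\mu S_{N_p}}-\tfrac{\delta}{\mu}$ on the nose. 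Making all the $\delta$-terms collapse into the single bounded error $\tfrac{\delta}{\mu}\big(1+\tfrac{2\Omega L}{\mu}\big)$ -- rather than a quantity that grows with the number of restarts -- is the delicate part, and it relies on the geometric contraction simultaneously attenuating the previously accumulated inexactness.
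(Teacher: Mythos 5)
Your overall architecture (a per-restart halving of $R_p^2$ up to an additive $\delta$-error, geometric absorption of those errors over $\log_2(R_0^2/\varepsilon)$ restarts, and $N_p\le 2L\Omega/\mu$ inner steps from $L_{k+1}\le 2L$) matches the paper, but the way you extract the per-restart contraction is genuinely different. The paper keeps the algorithm's actual restart point $x_{p+1}=w_{N_p-1}$ (the \emph{last} inner iterate) and converts the averaged guarantee $-\frac{1}{S_{N_p}}\sum_k\frac{\langle g_\delta(w_k),x_*-w_k\rangle}{L_{k+1}}\le \frac{\Omega R_p^2}{2S_{N_p}}+O(\delta)$ into a last-iterate statement by invoking the Lemma: since $V(x_*,\cdot)$ is approximately decreasing along the trajectory (up to $\delta/L_k$ per step), each $V(x_*,w_k)$ in the average is at least $V(x_*,w_{N_p-1})$ minus the accumulated inexactness, and this costs an extra $N_p\delta\le\frac{2\Omega L}{\mu}\delta$ per restart --- which is precisely where the factor $\left(1+\frac{2\Omega L}{\mu}\right)$ in the stated error bound comes from. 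You instead take the restart point to be the weighted average $\frac{1}{S_{N_p}}\sum_k\frac{w_k}{L_{k+1}}$ and apply Jensen to $V(\cdot,x_*)$, which is legitimate ($V(y,x)=d(y)+\text{affine in }y$ is convex in its first argument, and \eqref{eq2} applied with $x=w_k$, $y=x_*$ does yield a lower bound on the gap in terms of $V(w_k,x_*)$) and costs only $O(\delta)$ per restart. Your route is cleaner and would in fact give a \emph{better} error term, of order $\delta/\mu$ without the $\frac{2\Omega L}{\mu}$ factor; your closing move of ``identifying'' the per-restart error with $\frac{\delta}{2\mu}\left(1+\frac{2\Omega L}{\mu}\right)$ is therefore reverse-engineering the paper's constant rather than deriving it, and is unnecessary on your route.

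Two caveats keep this from being a complete proof of the statement as written. First, you are proving the theorem for a modified algorithm: {\tt Restarted UMP} as specified restarts from the output of {\tt UMP}, which the paper's proof identifies with the last iterate $w_{N_p-1}$, not with an ergodic average; to make your argument apply you must either amend the algorithm or supply the paper's last-iterate (Lemma-based) step anyway. Second, Jensen delivers $V(x_{p+1},x_*)$, with the solution in the \emph{second} slot, whereas the induction needs $\|x_{p+1}-x_*\|\le R_{p+1}$ (recoverable from $1$-strong convexity of $d$, so the induction does close) and the theorem's conclusion is about $V(x_*,x_p)$ with the solution in the \emph{first} slot; your argument as written ends with $V(x_p,x_*)\le\varepsilon+\dots$, which is not literally the claimed inequality since $V$ is not symmetric. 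You flag this asymmetry yourself as ``the delicate part'' but do not resolve it; the paper sidesteps it entirely because both its use of \eqref{eq2} (with $x=x_*$, $y=w_k$) and its Lemma already carry $x_*$ in the first argument throughout.
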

\begin{proof}
Due to relatively strong monotonicity of the operator:
\begin{equation}
    \mu V(x_*,z_{p+1})\leq \delta - \langle g_{\delta}(x_*),z_{p+1}-x_* \rangle - \langle g_{\delta}(z_{p+1}),x_*-z_{p+1}\rangle \leq \\\delta - \langle g_{\delta}(z_{p+1}),x_*-z_{p+1} \rangle 
\end{equation}

Let us use induction to show the fulfillment of the theorem. Consider $p=0$. After no more than $N_0 = \frac{2L\Omega}{\mu}$ iterations of Mirror Prox algorithm we get:

\begin{equation}
     -\langle g(x_*), \underbrace{w_{N_0-1}}_{x_1}-x_*\rangle \leq 2\delta -\frac{1}{S_{N_0}}\sum\limits_{k=0}^{N_0-1}\frac{\langle g_{\delta}(w_k),x_*-w_k\rangle}{L_{k+1}}\leq \frac{\Omega R_0^2}{2S_{N_0}}-\delta := R_1^2
\end{equation}

Let us assume, that this inequality takes place for some $p>0$:

\begin{equation}
    -2\delta +\frac{\mu}{S_{N_p}}\sum\limits_{k=0}^{N_p-1}\frac{V(x_*,w_k)}{L_{k+1}}\leq -2\delta -\frac{1}{S_{N_p}}\sum\limits_{k=0}^{N_p-1}\frac{\langle g_{\delta}(w_k),x_*-w_k\rangle}{L_{k+1}}\leq \frac{\Omega R_p^2}{2S_{N_p}}-\delta
\end{equation}
and prove, that the same holds for $p+1$.
$$
\sum_{k=0}^{N_{p}-1} \frac{V\left(x_{*}, w_{k}\right)}{L_{k+1}} \geqslant V\left(x_{*}, w_{N_p-1}\right)\cdot S_{N_p} -\frac{\delta}{L_{N_{p}}}-\left(\frac{\delta}{L_{N_p}}+\frac{\delta}{L_{N_p-1}}\right)-\ldots-\delta S_{N_{p}}=
$$
$$
=S_{N_{p}} V\left(x_{*}, w_{N_p-1}\right)-\frac{\delta N_p}{L_{N_p}} - \frac{\delta (N_p-1)}{L_{N_p-1}} - \frac{2\delta}{L_2} - \frac{\delta}{L_1}
$$

Thus,
$$-2 \delta+\frac{\mu}{S_{N_p}} \sum_{k=0}^{N_{p}-1} \frac{V\left(x_{*}, w_{k}\right)}{L_{k+1}} \geqslant-2 \delta + \mu V\left(x_{*}, w_{N_{p-1}}\right)-$$
$$
\frac{\delta}{S_{N_{p}}}
\biggl(\frac{N_{p}}{L_{N_p}}+\frac{N_{p}-1}{L_{N_{p}-1}}+\ldots +\frac{2}{L_2}+\frac{1}{L_1}\biggl)\geq \mu V(x_*,w_{N_p-1}) - (2+N_p)\delta
$$
So,
$$
V\left(x_{*}, x_{p+1}\right) \leqslant \frac{\Omega R_{p}^{2}}{ 2\mu S_{N_{p}}}+\frac{\delta+\delta N_p}{\mu}
\leqslant  \frac{\Omega R_{p}^{2}}{ 2\mu S_{N_{p}}}+\frac{\delta}{\mu}\left(1+\frac{2 \Omega L}{\mu}\right),
$$
which ends the proof of the theorem.

\end{proof}

\end{document}